\documentclass[11pt]{amsart}
	
	\usepackage{amscd,amsmath,amsthm,amsfonts,amssymb}
	\usepackage{mathtools}
	\usepackage{geometry}
	\usepackage{enumitem}
	\usepackage{aliascnt}
	\usepackage[most]{tcolorbox}
	\usepackage[dvipsnames]{xcolor}
	
	\usepackage[
	colorlinks=true,
	linkcolor=blue,
	citecolor=cyan,
	urlcolor=NavyBlue
	]{hyperref}
	
	\allowdisplaybreaks
	
	\newcommand{\N}{\mathbb N}
	\newcommand{\R}{\mathbb R}
	
	\newcommand{\ND}{\mathcal{ND}}

	\newtheorem{theorem}{Theorem}[section]
	
	\newaliascnt{lemma}{theorem}
	\newtheorem{lemma}[lemma]{Lemma}
	\aliascntresetthe{lemma}

	\newaliascnt{corollary}{theorem}
	
	\aliascntresetthe{corollary}

	\newaliascnt{definition}{theorem}
	
	\aliascntresetthe{definition}

	\newaliascnt{remark}{theorem}
	
	\aliascntresetthe{remark}

	\newaliascnt{problem}{theorem}
	
	\aliascntresetthe{problem}

	\newtcolorbox{mainidea}[1]{
		enhanced,
		breakable,
		colback=black!2,
		colframe=black!55,
		boxrule=0.45pt,
		arc=1mm,
		left=7pt,
		right=7pt,
		top=6pt,
		bottom=6pt,
		title={#1},
		fonttitle=\bfseries
	}
	
	\title[\((n,\aleph_0)\)-Lineability of Nowhere Differentiable Functions]{
		On the \((n,\aleph_0)\)-Lineability of Nowhere Differentiable Functions
	}
	
	\author[F. Lima]{Fábio Lima}
	
	\address{
		Departamento de Matem\'atica\\
		Universidade Federal da Paraíba\\
		58.051-900 - Jo\~{a}o Pessoa, Brazil.}

	\email{fabiolimaoliveira99@gmail.com}
	
	\author[M. Santos]{Mónica Santos}
	
		\address{
		Faculdade de Matemática, Universidade Federal de Uberlândia, Uberlândia 38400-902, Brazil.}
	
	\email{monica.madeira@academico.ufpb.br}
	
	\author[G. Ribeiro]{Geivison Ribeiro}
	
	\address{
		Departamento de Matem\'atica\\
		Universidade Federal do Maranh\~ao\\
		65085--580 S\~ao Lu\'is, Brazil
	}
	
	\email{geivison.ribeiro@ufma.br}
	
	\keywords{lineability, extension of linear structures, nowhere differentiable functions}
	
\subjclass[2020]{15A03, 46B87, 46A16, 28A99}
	
\begin{document}
		
		\begin{abstract}
		We establish a countable-dimensional extension property for the family of continuous nowhere differentiable functions, which we denote by
		$\ND[0,1]$. More precisely, we prove that every finite-dimensional subspace
		$F\subset C[0,1]$ whose nonzero elements are nowhere differentiable can be
		extended to a countably infinite-dimensional subspace
		$G\subset C[0,1]$ with the same property.
		
		\end{abstract}
		
		\maketitle
		
		%================================================
		% INTRODUCTION
		%================================================
		
		\section{Introduction}
		
		The concept of \textit{lineability} was formalized in 2005 by Aron, Gurariy, and
		Seoane-Sepúlveda \cite{AGS}, although some of the first results in this
		direction appeared earlier. Roughly speaking, lineability theory is
		concerned with finding large vector subspaces contained, except possibly for
		the zero vector, in sets that are not themselves vector spaces. When the
		ambient space is endowed with a topology, one may also require such
		subspaces to be closed or dense, giving rise to the notions of spaceability
		and dense lineability, respectively. Since then, this research area has
		developed considerably and has found applications in several branches of
		Analysis. We refer the reader to the monograph \cite{AGPS} and the references
		therein for a broad account of the subject.
		
		A classical example in this context is the set of continuous nowhere
		differentiable functions. Throughout this paper, we write
		\[
		\ND[0,1]
		:=
		\left\{
		f\in C[0,1] : f \text{ is nowhere differentiable}
		\right\}.
		\]
		Although \(\ND[0,1]\) is not a vector space, it contains large linear
		structures. In particular, Fonf, Gurariy, and Kadets \cite{Gurariy} proved
		that there exists an infinite-dimensional subspace of \(C[0,1]\) whose
		nonzero elements are nowhere differentiable. Moreover, Rodríguez-Piazza
		\cite{Piazza} showed that every separable Banach space is isometric to a
		space of continuous nowhere differentiable functions. These results show
		that nowhere differentiability, despite being a highly irregular pointwise
		property, is compatible with rich linear structures.
		
		As the theory developed, stronger notions of lineability were introduced.
		Indeed, ordinary lineability guarantees only the existence of a large
		subspace contained in a given set. It does not say whether a subspace already
		contained in that set can be extended to a larger one while preserving the
		same property. To study this extension phenomenon, Fávaro, Pellegrino, and
		Tomaz introduced in \cite{FPT} the notions of
		\((\alpha,\beta)\)-lineability and \((\alpha,\beta)\)-spaceability. Other
		related refinements were later considered, such as pointwise lineability
		\cite{PR}, and general criteria for stronger forms of lineability were
		established in \cite{Raposo}.

	Let us recall the definitions used in this paper. Let \(V\) be a vector
	space, let \(A\subset V\), and let \(\alpha\) be a cardinal number. We say
	that \(A\) is \(\alpha\)-lineable if there exists an
	\(\alpha\)-dimensional subspace \(W\subset V\) such that $
	W\setminus\{0\}\subset A$. Equivalently, \(A\cup\{0\}\) contains an \(\alpha\)-dimensional vector subspace.
	
	Now let \(\alpha<\beta\) be cardinal numbers. The set \(A\) is said to be
	\((\alpha,\beta)\)-lineable if \(A\) is \(\alpha\)-lineable and every
	\(\alpha\)-dimensional subspace \(F\subset V\) satisfying $
	F\setminus\{0\}\subset A$
	can be extended to a \(\beta\)-dimensional subspace \(G\subset V\) such that
	\[
	F\subset G
	\qquad\text{and}\qquad
	G\setminus\{0\}\subset A.
	\]
	Thus, this notion concerns not only the existence of large linear
	structures. It also requires that every admissible subspace of dimension
	\(\alpha\) can be extended to a larger subspace of dimension \(\beta\)
	without leaving the set under consideration.

		Motivated by this extension problem, we investigate the
		\((n,\aleph_0)\)-lineability of \(\ND[0,1]\). More precisely, we ask whether
		every finite-dimensional subspace of \(C[0,1]\) whose nonzero elements are
		nowhere differentiable can be extended to a countably infinite-dimensional
		subspace with the same property.
		
		\medskip
		
		Notice that the classical lineability and spaceability results for
		\(\ND[0,1]\) do not answer this question. They ensure the existence of large
		subspaces contained in \(\ND[0,1]\cup\{0\}\), but those subspaces are chosen
		during the corresponding constructions. Here, the initial
		finite-dimensional subspace is arbitrary and prescribed in advance.
		Therefore, the new directions must be constructed in such a way that every
		nontrivial linear combination involving both the original subspace and the
		added functions remains nowhere differentiable.
		
		Our main result gives a positive answer to this question.
		
		\begin{theorem}\label{thm:main}
			For every \(n\in\N\), the set \(\ND[0,1]\) is
			\((n,\aleph_0)\)-lineable.
		\end{theorem}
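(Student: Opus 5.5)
The plan is an inductive construction. Given an \(n\)-dimensional subspace \(F\subset C[0,1]\) with \(F\setminus\{0\}\subset\ND[0,1]\), we will build functions \(g_1,g_2,\dots\) one at a time. Put \(F_0=F\) and \(F_m=F+\operatorname{span}\{g_1,\dots,g_m\}\). The inductive hypothesis is that \(F_m\setminus\{0\}\subset\ND[0,1]\) and \(\dim F_m=n+m\). Then \(G=\bigcup_m F_m\) is countably infinite-dimensional, contains \(F\), and every nonzero \(h\in G\) lies in some \(F_m\), so \(h\in\ND[0,1]\). The induction step thus reduces to one finite-dimensional problem.

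\emph{Key lemma.} Let \(E\subset C[0,1]\) be finite-dimensional. Then there is \(g\in C[0,1]\) such that \(f+cg\in\ND[0,1]\) for every \(f\in E\) and every \(c\neq0\). Applying this with \(E=F_{m-1}\) gives \(g_m\). Indeed, a nonzero element of \(F_m\) is either in \(F_{m-1}\), where the hypothesis applies, or has the form \(f+cg_m\) with \(c\ne0\), where the lemma applies. Also \(g_m\notin F_{m-1}\), since otherwise \(-g_m+g_m=0\) would be nowhere differentiable; so the dimension goes up by one.

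\emph{Proof plan for the lemma.} Since the closed unit ball of \(E\) is compact in \(C[0,1]\), it is uniformly equicontinuous. So there is a modulus \(\omega(\delta)\downarrow0\) with \(|f(x+h)-f(x)|\le\|f\|_\infty\,\omega(|h|)\) for all \(f\in E\). The strategy is to make \(g\) locally ``rougher than \(\omega\)''. Let \(\psi(t)=\operatorname{dist}(t,\mathbb Z)\) and set
\[
g(x)=\sum_{k\ge1}2^{-k}\psi(b_kx),
\]
where the integers \(b_k\) are chosen inductively with \(4b_{k-1}\mid b_k\) and \(b_k\) so large that three conditions hold:
\begin{enumerate}
\item[(i)] \(\omega(1/(4b_k))\le 4^{-k}\);
\item[(ii)] \(2^{-k}b_k\ge k\cdot 4^k\);
\item[(iii)] \(\sum_{j<k}2^{-j}b_j\le 2^{-k}b_k/8\).
\end{enumerate}
Fix \(x\) and \(k\), and choose \(h_k=\pm 1/(4b_k)\) so that \(x\) and \(x+h_k\) lie in one linear piece of \(\psi(b_k\,\cdot)\); then the \(k\)-th term changes by exactly \(2^{-k}/4\). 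For \(j>k\), \(b_jh_k\) is an integer, so those terms do not change at all. For \(j<k\), the terms change by at most \(\sum_{j<k}2^{-j}b_j|h_k|\le 2^{-k}/32\) by (iii). Hence \(|g(x+h_k)-g(x)|\ge 2^{-k}/8\).

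Now suppose \(f+cg\) were differentiable at \(x\) with \(c\ne0\). Then
\[
\frac{|(f+cg)(x+h_k)-(f+cg)(x)|}{|h_k|}\ \ge\ 4b_k\Bigl(|c|\,2^{-k}/8-\|f\|_\infty 4^{-k}\Bigr),
\]
which is at least \(|c|\,b_k2^{-k}/4\) for large \(k\). By (ii) this tends to \(\infty\), contradicting differentiability. (Points near \(0\) or \(1\) only force the choice of the sign of \(h_k\), which is always possible because \(|h_k|\) is small.)

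The main obstacle is this lemma, specifically the uniform control of the perturbation \(f\). It is resolved by the compactness of the unit ball of the finite-dimensional \(E\), which gives a single modulus \(\omega\) dominated by the increments of \(g\) at the scales \(h_k\). The induction then handles the countably many directions automatically. Each new \(g_m\) only has to beat the finite-dimensional space already built, because in any linear combination, the \(g_j\) with the largest index and nonzero coefficient is the dominant term.
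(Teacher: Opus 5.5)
Your proof is correct, and it takes a genuinely different route from the paper's.

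\textbf{Your approach.} You isolate a one-step lemma: for every finite-dimensional $E\subset C[0,1]$ there is $g$ with $f+cg\in\ND[0,1]$ for all $f\in E$ and $c\neq0$. You then iterate it, recomputing at each stage the modulus of continuity of the whole space $F_{m-1}$ built so far. The earlier directions $g_1,\dots,g_{m-1}$ are simply absorbed into $E$. The induction therefore handles all interactions among the new functions, since a nonzero element of $F_m\setminus F_{m-1}$ has a nonzero coefficient on the newest $g_m$. Inside the lemma, the divisibility condition $4b_{k-1}\mid b_k$ makes every higher-frequency term contribute exactly zero at the increment $h_k$. That is why geometric amplitudes $2^{-k}$ suffice.

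\textbf{The paper's approach.} It builds all new directions at once from a single sequence $g_n=a_n\Psi_{m_n}$, adapted only to $\omega_F$. Higher-frequency terms are controlled by the super-exponential decay $a_n=2^{-2^n}$, which gives $\sum_{i>n}a_i=o(a_n)$. Lower-frequency terms are controlled by their Lipschitz constants. The $g_n$ are then grouped into infinite blocks $h_k=\sum_{n\in\N_k}g_n$, each of which dominates $F$ and every other block along its own scales $t_n$, $n\in\N_k$.

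\textbf{Trade-offs.} Your route is more modular. It shows that one-step extendability of every finite-dimensional admissible subspace already yields $(n,\aleph_0)$-lineability, a reduction valid for any set $A$. The price is a recursive construction in which each $g_m$ depends on all earlier choices. The paper instead produces a one-shot explicit family $(h_k)$ that depends on $F$ only through $\omega_F$, with symmetric, uniform dominance estimates for all blocks.

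The definition also requires $\ND[0,1]$ to be $n$-lineable, which neither you nor the paper addresses explicitly. It follows from your own lemma by running the induction from $F_0=\{0\}$.
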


		The proof is constructive. Starting from a prescribed finite-dimensional
		subspace \(F\subset C[0,1]\), we use the compactness of its closed unit ball
		to obtain a uniform modulus of continuity. We then construct continuous
		triangular waves with rapidly decreasing amplitudes and rapidly increasing
		frequencies. The parameters are chosen so that, along suitable increments
		tending to zero, one distinguished oscillatory term dominates both the
		contribution of \(F\) and the contributions of all remaining oscillatory
		terms. Finally, we group these functions into pairwise disjoint infinite
		blocks and obtain a countable linearly independent family adapted to \(F\).
		
		The paper is organized as follows. In \autoref{sec:preliminaries}, we introduce the notation,
		construct the oscillatory functions, and establish the estimates needed to
		control their difference quotients. In \autoref{sec:main-result}, we use these estimates to
		prove \autoref{thm:main}.
		
		%================================================
		% PRELIMINARIES
		%================================================
		
		\section{Preliminary results and notation}\label{sec:preliminaries}
		
		Throughout the paper, all functions are real-valued, and \(C[0,1]\) is
		endowed with the supremum norm. At the endpoints of \([0,1]\),
		differentiability is understood in the one-sided sense. Accordingly, if
		\(f\in C[0,1]\) and \(x\in[0,1]\), every difference quotient
		\[
		\frac{f(x+t)-f(x)}{t}
		\]
		is considered only for nonzero increments \(t\) such that
		\(x+t\in[0,1]\).
		
		We use the notation \(u_n=o(v_n)\), where \(v_n>0\), to mean that
		\(u_n/v_n\to0\) as \(n\to\infty\). Moreover, \(a_n\downarrow0\) means that
		\((a_n)\) is decreasing and converges to zero, while
		\(m_n\uparrow\infty\) means that \((m_n)\) is strictly increasing and
		unbounded. Whenever an estimate is said to hold uniformly in \(x\), the
		associated null sequence does not depend on \(x\).
		
		Let \(F\subset C[0,1]\) be a finite-dimensional subspace, and let
		\[
		B_F:=\{f\in F:\|f\|_\infty\le1\}.
		\]
		Since \(B_F\) is compact in \(C[0,1]\), it is equicontinuous. Hence the
		uniform modulus of continuity
		\[
		\omega_F(\delta)
		:=
		\sup\left\{
		|f(s)-f(t)|:
		f\in B_F,\ s,t\in[0,1],\ |s-t|\le\delta
		\right\}
		\]
		satisfies
		\begin{equation}\label{eq:modulus-continuity}
			\omega_F(\delta)\longrightarrow0
			\qquad\text{as }\delta\downarrow0.
		\end{equation}
		
		The next construction is the core of the proof. The functions \(g_n\) will
		have very small norms but very large difference quotients at the scale
		\(t_n(x)\). 
		
		\begin{mainidea}{Hierarchy of the relevant difference quotients}
			For suitable sequences and every \(x\in[0,1]\), the construction gives
			\[
			\left|
			\frac{g_n(x+t_n)-g_n(x)}{t_n}
			\right|
			=2a_nm_n,
			\]
			whereas
			\[
			\sup_{f\in B_F}
			\left|
			\frac{f(x+t_n)-f(x)}{t_n}
			\right|
			\le \frac{4}{n}a_nm_n,
			\qquad
			\sum_{i\ne n}
			\left|
			\frac{g_i(x+t_n)-g_i(x)}{t_n}
			\right|
			\le \varepsilon_n a_nm_n
			\]
			for a sequence \(\varepsilon_n\to0\).
		\end{mainidea}
		
		\begin{lemma}\label{lem:adapted-atoms}
			Let \(F\subset C[0,1]\) be finite-dimensional. Then there exist sequences
			\[
			(a_n)\subset(0,1),\qquad
			(m_n)\subset\N,\qquad
			(g_n)\subset C[0,1]
			\]
			such that
			\[
			a_n\downarrow0,\qquad
			\sum_{n=1}^\infty a_n<\infty,\qquad
			m_n\uparrow\infty,\qquad
			a_nm_n\longrightarrow\infty,
			\qquad
			\|g_n\|_\infty\le a_n.
			\]
			Moreover, for every \(x\in[0,1]\), there are nonzero numbers
			\(t_n=t_n(x)\) satisfying \(x+t_n\in[0,1]\),
			\(t_n\to0\), and
			\begin{enumerate}[label=\textnormal{(\roman*)}]
				\item
				\[
				\frac{1}{4m_n}\le |t_n|\le\frac{1}{2m_n};
				\]
				\item
				\[
				\left|
				\frac{g_n(x+t_n)-g_n(x)}{t_n}
				\right|
				=2a_nm_n;
				\]
				\item
				\[
				\sup_{f\in B_F}
				\left|
				\frac{f(x+t_n)-f(x)}{t_n}
				\right|
				\le \frac{4}{n}a_nm_n.
				\]
			\end{enumerate}
			The estimates are uniform in \(x\in[0,1]\).
		\end{lemma}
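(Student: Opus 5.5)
We take $g_n(x)=a_n\,\phi(m_n x)$, where $\phi$ is the $1$-periodic triangular wave $\phi(y)=\operatorname{dist}(y,\mathbb Z)$. Then $\phi$ has Lipschitz constant $1$ and values in $[0,1/2]$, so $\|g_n\|_\infty\le a_n/2\le a_n$. On each interval of length $1/(2m_n)$ between consecutive points of $\frac{1}{2m_n}\mathbb Z$, the function $g_n$ is affine with slope $\pm a_nm_n$.

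\textbf{Choice of $t_n(x)$.} Fix $x$ and $n$. Let $I=[k/(2m_n),(k+1)/(2m_n)]$ be a linear piece of $g_n$ containing $x$, chosen with $I\subset[0,1]$, which is possible since $m_n\in\N$. The point $x$ lies in one half of $I$, so it lies within $1/(4m_n)$ of one endpoint. Move from $x$ across the farther endpoint into the adjacent piece, by $t_n=\pm 1/(2m_n)$ if that stays in $[0,1]$. This choice satisfies (i), but the quotient is then not exactly $2a_nm_n$, and we need exactness for (ii). We therefore aim for a symmetric reflection instead. Put $y=m_nx$ and let $c$ be a peak or valley of $\phi$, i.e.\ a point of $\frac12\mathbb Z$, nearest to $y$ on a suitable side. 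Choose the increment so that $x$ and $x+t_n$ lie on opposite sides of the kink, with $g_n(x+t_n)-g_n(x)=\pm 2a_nm_n t_n$.

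That exact identity cannot hold on opposite sides of a single kink, where the slopes cancel. So we use instead the choice that keeps both points on one linear piece. Then the quotient is only $a_nm_n$, and we fix the factor by taking $g_n=2a_n\phi(m_nx)$. This gives $\|g_n\|\le a_n$ since $\phi\le1/2$, and quotient exactly $2a_nm_n$ whenever $x$ and $x+t_n$ lie in the same linear piece of length $1/(2m_n)$.

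Now choose $t_n$ explicitly. Since $x$ lies in the closed piece $I=[\alpha,\alpha+\frac1{2m_n}]\subset[0,1]$, at least one of $\alpha$, $\alpha+\frac{1}{2m_n}$ is at distance $d\ge 1/(4m_n)$ from $x$. Take $t_n$ to be the signed distance to that endpoint, so that $1/(4m_n)\le|t_n|\le 1/(2m_n)$ and $x+t_n\in I\subset[0,1]$. This gives (i) and (ii), uniformly in $x$, and $t_n\to0$ because $m_n\to\infty$.

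\textbf{Choice of parameters for (iii).} For $f\in B_F$ and $|t|\ge 1/(4m_n)$, (i) gives
\[
\left|\frac{f(x+t)-f(x)}{t}\right|\le 4m_n\,\omega_F\!\left(\tfrac1{2m_n}\right).
\]
So (iii) holds provided $\omega_F(1/(2m_n))\le a_n/n$. We choose the sequences recursively. First set $a_n=2^{-n}$, which is decreasing with $\sum a_n<\infty$. Then, by \eqref{eq:modulus-continuity}, pick $m_n>m_{n-1}$ so large that $\omega_F(1/(2m_n))\le 2^{-n}/n$ and $a_nm_n\ge n$. Both are possible since $\omega_F(\delta)\to0$ as $\delta\downarrow 0$. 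The case $F=\{0\}$ is trivial. This yields $a_nm_n\to\infty$ and (iii), with all bounds uniform in $x$.

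\textbf{Main obstacle.} The only delicate point is (ii). It must hold with exact equality and for every $x$, including points near a kink and near the endpoints $0,1$. Keeping $x$ and $x+t_n$ inside a single linear piece that lies within $[0,1]$ handles this. It also explains why the amplitude factor $2$ is built into $g_n$.
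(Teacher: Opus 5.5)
Your final argument is correct and is essentially the paper's proof. It uses the same wave $g_n=2a_n\operatorname{dist}(m_nx,\mathbb Z)$ (the paper's $a_n\Psi_{m_n}$), the same increment $t_n$ to the farther endpoint of a linear piece contained in $[0,1]$, and the same bound $4m_n\,\omega_F(1/(2m_n))\le\frac4n a_nm_n$; the abandoned kink-crossing attempts should simply be deleted from the write-up.

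The only difference is that you take $a_n=2^{-n}$ and drop the paper's extra requirement $\sum_{i<n}a_im_i\le a_nm_n/n$. This is harmless for this statement. It is not harmless for the off-diagonal lemma that follows, which is proved from these internal choices: with $a_n=2^{-n}$ one gets $\frac{1}{a_n}\sum_{i>n}a_i=1$ for all $n$ rather than a null sequence.
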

		
		\begin{proof}
			Set
			\[
			a_n:=2^{-2^n},
			\qquad n\in\N.
			\]
			Then \(a_n\downarrow0\) and \(\sum_n a_n<\infty\). We shall also use the
			strong tail estimate
			\begin{equation}\label{eq:tail-an}
				\frac{1}{a_n}\sum_{i>n}a_i\longrightarrow0.
			\end{equation}
			Indeed, since \(a_{n+j}=a_n^{2^j}\) and \(a_n\le1/4\), we have
			\[
			\sum_{i>n}a_i
			=
			\sum_{j=1}^\infty a_n^{2^j}
			\le
			\sum_{k=2}^\infty a_n^k
			=
			\frac{a_n^2}{1-a_n}
			\le
			\frac{4}{3}a_n^2,
			\]
			which proves \eqref{eq:tail-an}.
			
			We now choose \((m_n)\) recursively. Having chosen
			\(m_1,\ldots,m_{n-1}\), take an integer \(m_n>m_{n-1}\) sufficiently large
			so that
			\begin{equation}\label{eq:choice-frequency}
				a_nm_n\ge n,\qquad
				\omega_F\left(\frac{1}{2m_n}\right)\le\frac{a_n}{n},
				\qquad
				\sum_{i<n}a_im_i\le\frac{a_nm_n}{n}.
			\end{equation}
			This is possible by \eqref{eq:modulus-continuity}. In particular,
			\(m_n\uparrow\infty\) and \(a_nm_n\to\infty\).
			
			Consider the continuous \(1\)-periodic triangular wave
			\[
			\varphi(t):=2\,\operatorname{dist}(t,\mathbb Z),
			\qquad t\in\R,
			\]
			and, for \(N\in\N\), define
			\[
			\Psi_N(x):=\varphi(Nx),
			\qquad x\in[0,1].
			\]
			The function \(\Psi_N\) takes values in \([0,1]\), has supremum norm one,
			and is affine with slope \(2N\) or \(-2N\) on each interval
			\[
			J_{N,j}
			:=
			\left[\frac{j}{2N},\frac{j+1}{2N}\right],
			\qquad j=0,\ldots,2N-1.
			\]
			Set
			\[
			g_n:=a_n\Psi_{m_n}.
			\]
			Then \(\|g_n\|_\infty\le a_n\).
			
			Fix \(x\in[0,1]\). Choose one interval \(J_{m_n,j}\) containing \(x\);
			at a common endpoint of two adjacent intervals, either interval may be
			chosen. Let \(y_n(x)\) be an endpoint of this interval farthest from \(x\),
			and define
			\[
			t_n(x):=y_n(x)-x.
			\]
			Since the length of \(J_{m_n,j}\) is \(1/(2m_n)\), we obtain
			\[
			\frac{1}{4m_n}
			\le |t_n(x)|
			\le \frac{1}{2m_n}.
			\]
			In particular, \(t_n(x)\ne0\), \(x+t_n(x)\in[0,1]\), and \(t_n(x)\to0\).
			
			The points \(x\) and \(x+t_n(x)\) belong to the same affine piece of
			\(\Psi_{m_n}\). Therefore,
			\[
			\left|
			\frac{\Psi_{m_n}(x+t_n)-\Psi_{m_n}(x)}{t_n}
			\right|
			=2m_n,
			\]
			and hence
			\[
			\left|
			\frac{g_n(x+t_n)-g_n(x)}{t_n}
			\right|
			=2a_nm_n.
			\]
			
			Finally, for every \(f\in B_F\),
			\[
			|f(x+t_n)-f(x)|
			\le
			\omega_F(|t_n|)
			\le
			\omega_F\left(\frac{1}{2m_n}\right).
			\]
			Using \(|t_n|\ge1/(4m_n)\) and \eqref{eq:choice-frequency}, we get
			\[
			\left|
			\frac{f(x+t_n)-f(x)}{t_n}
			\right|
			\le
			4m_n\omega_F\left(\frac{1}{2m_n}\right)
			\le
			\frac{4}{n}a_nm_n.
			\]
			All the estimates are independent of \(x\), except for the choice of the
			increment \(t_n(x)\).
		\end{proof}
		
	The preceding lemma identifies one oscillatory term whose contribution is large. The next result shows that the combined contribution of all the other terms is negligible in comparison. This estimate will allow us to group the oscillatory functions while preserving the dominance of the selected term.

		\begin{lemma}\label{lem:off-diagonal}
			Under the assumptions and notation of \autoref{lem:adapted-atoms}, there is
			a sequence \((\varepsilon_n)\) of nonnegative numbers with
			\(\varepsilon_n\to0\) such that, for every \(x\in[0,1]\),
			\[
			\sum_{i\ne n}
			\left|
			\frac{g_i(x+t_n)-g_i(x)}{t_n}
			\right|
			\le
			\varepsilon_n a_nm_n.
			\]
			The estimate is uniform in \(x\).
		\end{lemma}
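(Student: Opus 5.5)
Split the sum into the low-frequency terms \(i<n\) and the high-frequency terms \(i>n\), and bound each by a fixed multiple of \(a_nm_n\) that tends to zero, independently of \(x\).

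\emph{Low frequencies, \(i<n\).} Each \(\Psi_{m_i}\) is Lipschitz with constant \(2m_i\), so \(g_i=a_i\Psi_{m_i}\) is Lipschitz with constant \(2a_im_i\). Hence, for every \(x\) and every nonzero \(t_n\),
\[
\left|\frac{g_i(x+t_n)-g_i(x)}{t_n}\right|\le 2a_im_i .
\]
Summing over \(i<n\) and using the third condition in \eqref{eq:choice-frequency} gives
\[
\sum_{i<n}\left|\frac{g_i(x+t_n)-g_i(x)}{t_n}\right|\le 2\sum_{i<n}a_im_i\le \frac{2}{n}\,a_nm_n .
\]

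\emph{High frequencies, \(i>n\).} Here the Lipschitz bound is useless, since \(m_i\) is huge. Instead use only the size of \(g_i\): \(|g_i(x+t_n)-g_i(x)|\le 2\|g_i\|_\infty\le 2a_i\). By item (i) of \autoref{lem:adapted-atoms}, \(|t_n|\ge 1/(4m_n)\), so each such quotient is at most \(8a_im_n\). Therefore
\[
\sum_{i>n}\left|\frac{g_i(x+t_n)-g_i(x)}{t_n}\right|\le 8m_n\sum_{i>n}a_i=\left(\frac{8}{a_n}\sum_{i>n}a_i\right)a_nm_n .
\]
By the tail estimate \eqref{eq:tail-an}, the factor in parentheses tends to zero. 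More concretely, the bound \(\sum_{i>n}a_i\le \tfrac43 a_n^2\) established in that proof makes it at most \(\tfrac{32}{3}a_n\).

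Set
\[
\varepsilon_n:=\frac{2}{n}+\frac{8}{a_n}\sum_{i>n}a_i .
\]
Then \(\varepsilon_n\ge0\), \(\varepsilon_n\to0\), and \(\varepsilon_n\) does not depend on \(x\); only the increment \(t_n(x)\) does. This gives the required uniformity.

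The only delicate point is the high-frequency tail. There we must not differentiate: the terms \(g_i\) with \(i>n\) oscillate much faster than the scale \(t_n\). They are controlled by amplitude alone, and this is exactly why the doubly exponential decay \(a_n=2^{-2^n}\) was chosen, so that \(\sum_{i>n}a_i=o(a_n)\). Everything else follows directly from the choices in \eqref{eq:choice-frequency} and the lower bound in item (i).
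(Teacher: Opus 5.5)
Your proposal is correct and follows essentially the same route as the paper: you bound the terms with \(i<n\) by the Lipschitz estimate \(2a_im_i\) together with the third condition in \eqref{eq:choice-frequency}, and the terms with \(i>n\) by \(2a_i/|t_n|\le 8a_im_n\), arriving at exactly the paper's \(\varepsilon_n=\frac{2}{n}+\frac{8}{a_n}\sum_{i>n}a_i\). Your explicit bound \(\tfrac{32}{3}a_n\) for the tail factor is a correct refinement of the paper's appeal to \eqref{eq:tail-an}.
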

		
		\begin{proof}
			The function \(t\mapsto\operatorname{dist}(t,\mathbb Z)\) is
			\(1\)-Lipschitz. Hence \(\Psi_{m_i}\) is \(2m_i\)-Lipschitz, and \(g_i\)
			is \(2a_im_i\)-Lipschitz. Therefore, for \(i<n\),
			\[
			\left|
			\frac{g_i(x+t_n)-g_i(x)}{t_n}
			\right|
			\le 2a_im_i.
			\]
			By \eqref{eq:choice-frequency},
			\begin{equation}\label{eq:previous-atoms}
				\sum_{i<n}
				\left|
				\frac{g_i(x+t_n)-g_i(x)}{t_n}
				\right|
				\le
				2\sum_{i<n}a_im_i
				\le
				\frac{2}{n}a_nm_n.
			\end{equation}
			
			For \(i>n\), we use \(\|g_i\|_\infty\le a_i\) and
			\(|t_n|\ge1/(4m_n)\) to obtain
			\[
			\left|
			\frac{g_i(x+t_n)-g_i(x)}{t_n}
			\right|
			\le
			8a_im_n.
			\]
			Consequently,
			\begin{equation}\label{eq:future-atoms}
				\sum_{i>n}
				\left|
				\frac{g_i(x+t_n)-g_i(x)}{t_n}
				\right|
				\le
				8m_n\sum_{i>n}a_i
				=
				\left(
				\frac{8}{a_n}\sum_{i>n}a_i
				\right)a_nm_n.
			\end{equation}
			
			Set
			\[
			\varepsilon_n
			:=
			\frac{2}{n}
			+
			\frac{8}{a_n}\sum_{i>n}a_i.
			\]
			By \eqref{eq:tail-an}, \(\varepsilon_n\to0\). Combining
			\eqref{eq:previous-atoms} and \eqref{eq:future-atoms} completes the proof.
		\end{proof}
		
\medskip		
		
	We now divide the sequence of oscillatory functions into pairwise disjoint infinite sets. For each of these sets, we define a new function by summing the corresponding terms. Along a suitable subsequence, one of these functions provides the dominant contribution, while the contributions of all the remaining functions become negligible.

		\begin{lemma}\label{lem:block-functions}
			Let
			\[
			\N=\bigcup_{k=1}^\infty \N_k
			\]
			be a partition of \(\N\) into pairwise disjoint infinite subsets. Define
			\[
			h_k:=\sum_{n\in\N_k}g_n,
			\qquad k\in\N.
			\]
			Then \(h_k\in C[0,1]\) for every \(k\), and the sequence
			\((h_k)_{k=1}^\infty\) is linearly independent. Moreover, for every
			\(x\in[0,1]\), every \(k\in\N\), and every \(n\in\N_k\),
			\begin{equation}\label{eq:dominant-block}
				\left|
				\frac{h_k(x+t_n)-h_k(x)}{t_n}
				\right|
				\ge
				(2-\varepsilon_n)a_nm_n,
			\end{equation}
			whereas, for every \(j\ne k\),
			\begin{equation}\label{eq:other-block}
				\left|
				\frac{h_j(x+t_n)-h_j(x)}{t_n}
				\right|
				\le
				\varepsilon_n a_nm_n.
			\end{equation}
			In particular, the right-hand side of
			\eqref{eq:dominant-block} is at least \(a_nm_n\) for all sufficiently large
			\(n\in\N_k\).
		\end{lemma}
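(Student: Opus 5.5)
Continuity of each \(h_k\) comes from the Weierstrass M-test. Since \(\|g_n\|_\infty\le a_n\) and \(\sum_n a_n<\infty\) by \autoref{lem:adapted-atoms}, the series \(\sum_{n\in\N_k}g_n\) converges uniformly on \([0,1]\). Its limit \(h_k\) is therefore continuous. The same absolute and uniform convergence lets us pass to difference quotients termwise: for fixed \(x\) and \(n\),
\[
\frac{h_k(x+t_n)-h_k(x)}{t_n}=\sum_{i\in\N_k}\frac{g_i(x+t_n)-g_i(x)}{t_n},
\]
and this series converges absolutely because each term is bounded by \(2a_i/|t_n|\).

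For \eqref{eq:dominant-block}, fix \(n\in\N_k\) and split off the term \(i=n\). By \autoref{lem:adapted-atoms}(ii) this term has absolute value exactly \(2a_nm_n\). The remaining terms are indexed by \(i\in\N_k\setminus\{n\}\subset\{i\ne n\}\), so by \autoref{lem:off-diagonal} their total absolute contribution is at most \(\varepsilon_n a_nm_n\). The reverse triangle inequality then gives the lower bound \((2-\varepsilon_n)a_nm_n\).

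For \eqref{eq:other-block}, take \(j\ne k\). The blocks are disjoint, so every index in \(\N_j\) differs from \(n\). The triangle inequality together with \autoref{lem:off-diagonal} bounds the quotient by \(\varepsilon_n a_nm_n\). The final remark holds because \(\varepsilon_n\to0\), so \(2-\varepsilon_n\ge1\) for large \(n\).

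The main step is linear independence, and I would derive it from the two estimates just proved. Suppose \(\sum_{j=1}^N c_jh_j=0\) with some \(c_k\ne0\). Fix any \(x\), and take \(n\in\N_k\) large; this is possible because \(\N_k\) is infinite. Evaluating the difference quotient of the zero function at the increment \(t_n(x)\) gives
\[
0\ge |c_k|(2-\varepsilon_n)a_nm_n-\sum_{j\ne k}|c_j|\,\varepsilon_n a_nm_n .
\]
Dividing by \(a_nm_n>0\) yields \(0\ge |c_k|(2-\varepsilon_n)-\varepsilon_n\sum_{j\ne k}|c_j|\). As \(n\to\infty\) in \(\N_k\), the right-hand side tends to \(2|c_k|>0\), which is a contradiction. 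Hence all \(c_k=0\).

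The only delicate point is the termwise manipulation of the infinite sum inside the difference quotient. Uniform convergence handles it, so there is no real obstacle. The essential feature of the argument is that the off-block bound \eqref{eq:other-block} holds for all \(j\ne k\) at once.
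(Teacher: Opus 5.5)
Your proposal is correct and follows the paper's proof essentially step for step. You use the M-test for continuity, split off the $i=n$ term and invoke \autoref{lem:off-diagonal} for both estimates, and derive linear independence by letting $n\to\infty$ through $\N_k$ in the difference quotient of the vanishing combination. Your remark that the difference quotient of $h_k$ may be expanded termwise (pointwise absolute convergence already suffices) spells out a step the paper leaves implicit.
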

		
		\begin{proof}
			Since
			\[
			\sum_{n=1}^\infty\|g_n\|_\infty
			\le
			\sum_{n=1}^\infty a_n
			<
			\infty,
			\]
			each series defining \(h_k\) converges absolutely and uniformly on
			\([0,1]\). Hence \(h_k\in C[0,1]\).
			
			Fix \(x\in[0,1]\), \(k\in\N\), and \(n\in\N_k\). Since
			\[
			h_k
			=
			g_n+\sum_{\substack{i\in\N_k\\i\ne n}}g_i,
			\]
			\autoref{lem:adapted-atoms} and \autoref{lem:off-diagonal} yield
			\[
			\left|
			\frac{h_k(x+t_n)-h_k(x)}{t_n}
			\right|
			\ge
			2a_nm_n
			-
			\sum_{i\ne n}
			\left|
			\frac{g_i(x+t_n)-g_i(x)}{t_n}
			\right|,
			\]
			which proves \eqref{eq:dominant-block}. If \(j\ne k\), then
			\(n\notin\N_j\), and therefore
			\[
			\left|
			\frac{h_j(x+t_n)-h_j(x)}{t_n}
			\right|
			\le
			\sum_{i\ne n}
			\left|
			\frac{g_i(x+t_n)-g_i(x)}{t_n}
			\right|.
			\]
			This proves \eqref{eq:other-block}.
			
			It remains to prove linear independence. Suppose that
			\[
			\sum_{k=1}^p c_kh_k=0
			\]
			for some scalars \(c_1,\ldots,c_p\), and choose \(k_0\) such that
			\(c_{k_0}\ne0\). Fix \(x\in[0,1]\), and let \(n\to\infty\) through
			\(\N_{k_0}\). Taking difference quotients along \(t_n(x)\), and using
			\eqref{eq:dominant-block} and \eqref{eq:other-block}, we obtain
			\[
			|c_{k_0}|(2-\varepsilon_n)a_nm_n
			\le
			\varepsilon_n a_nm_n
			\sum_{\substack{1\le k\le p\\k\ne k_0}}|c_k|.
			\]
			After division by \(a_nm_n>0\) and passage to the limit, this gives
			\(2|c_{k_0}|\le0\), a contradiction. Thus \((h_k)\) is linearly independent.
		\end{proof}
		
		%================================================
		% MAIN RESULT
		%================================================
		
		\section{Proof of the main theorem}\label{sec:main-result}
		
		We are now ready to prove the extension property announced in the
		introduction.
		
		\begin{proof}[Proof of \autoref{thm:main}]
			Fix \(n\in\N\), and let \(F\subset C[0,1]\) be an \(n\)-dimensional
			subspace such that
			\[
			F\setminus\{0\}\subset\ND[0,1].
			\]
			Apply the construction of Section~\ref{sec:preliminaries} to \(F\), and let
			\((h_k)\) be the linearly independent sequence obtained in
			\autoref{lem:block-functions}. Set
			\[
			E:=\operatorname{span}\{h_k:k\in\N\},
			\qquad
			G:=F+E.
			\]
			
			We show that every nonzero element of \(G\) is nowhere differentiable. Let
			\[
			u=f+\sum_{k=1}^p c_kh_k,
			\qquad
			f\in F.
			\]
			If \(c_1=\cdots=c_p=0\), then \(u=f\), and the conclusion follows from the
			assumption on \(F\), provided \(u\ne0\).
			
			Suppose now that \(c_{k_0}\ne0\) for some
			\(k_0\in\{1,\ldots,p\}\). Fix \(x\in[0,1]\), and let \(n\to\infty\)
			through \(\N_{k_0}\). By \autoref{lem:adapted-atoms}, applied to
			\(f/\|f\|_\infty\) when \(f\ne0\),
			\[
			\left|
			\frac{f(x+t_n)-f(x)}{t_n}
			\right|
			\le
			\frac{4\|f\|_\infty}{n}a_nm_n.
			\]
			The same estimate is trivial when \(f=0\). By
			\autoref{lem:block-functions},
			\begin{align*}
				\left|
				\frac{u(x+t_n)-u(x)}{t_n}
				\right|
				&\ge
				|c_{k_0}|(2-\varepsilon_n)a_nm_n
				-
				\frac{4\|f\|_\infty}{n}a_nm_n
				\\
				&\quad
				-
				\varepsilon_n a_nm_n
				\sum_{\substack{1\le k\le p\\k\ne k_0}}|c_k|.
			\end{align*}
			Equivalently,
			\[
			\left|
			\frac{u(x+t_n)-u(x)}{t_n}
			\right|
			\ge
			\left[
			|c_{k_0}|(2-\varepsilon_n)
			-
			\frac{4\|f\|_\infty}{n}
			-
			\varepsilon_n
			\sum_{\substack{1\le k\le p\\k\ne k_0}}|c_k|
			\right]a_nm_n.
			\]
			The expression in brackets converges to \(2|c_{k_0}|>0\). Hence, for all
			sufficiently large \(n\in\N_{k_0}\),
			\[
			\left|
			\frac{u(x+t_n)-u(x)}{t_n}
			\right|
			\ge
			|c_{k_0}|a_nm_n.
			\]
			Since \(a_nm_n\ge n\), the right-hand side tends to infinity. Therefore the
			difference quotients of \(u\) at \(x\) are unbounded along a sequence of
			admissible increments tending to zero. Thus \(u\) is not differentiable at
			\(x\). Since \(x\in[0,1]\) was arbitrary, \(u\in\ND[0,1]\).
			
			The same argument shows that no nontrivial finite linear combination of the
			\(h_k\)'s can belong to \(F\). Consequently,
			\[
			G=F\oplus E.
			\]
			Since \(\dim F=n\) and \(\dim E=\aleph_0\), we have
			\(\dim G=\aleph_0\). Moreover,
			\[
			F\subset G
			\qquad\text{and}\qquad
			G\setminus\{0\}\subset\ND[0,1].
			\]
			Therefore \(\ND[0,1]\) is \((n,\aleph_0)\)-lineable.
		\end{proof}

		%%%%%%%%%%%%%%%%%%%%%%%%%%%%%%%%%%%%%%%%%%%%%%%%%%%%%%%%%%%%
		

\begin{thebibliography}{99}                                                                                               %
			
			
			\bibitem {ABRR}\href{https://doi.org/10.1007/s00574-023-00360-w}{G.
				Ara\'{u}jo, A. Barbosa, A. Raposo Jr., G. Ribeiro, \textit{On the spaceability
					of the set of functions in the Lebesgue space }$L_{p}$\textit{ which are not
					in }$L_{q}$, Bull. Braz. Math. Soc. $($N.S.$)$ \textbf{54} $($2023$)$, Paper
				no 44, 9pp.}
			
			\bibitem {Araujo}\href{https://doi.org/10.1007/s13398-023-01505-8}{G.
				Ara\'{u}jo and A. Barbosa, \textit{A general lineability criterion for
					complements of vector spaces}, Rev. Real Acad. Cienc. Exactas Fis. Nat. Ser.
				A-Mat. \textbf{118} $($2024$)$, no. 5.}
			
			\bibitem {AGPS}\href{https://doi.org/10.1201/b19277}{R.M. Aron, L.
				Bernal-Gonz\'{a}lez, D. Pellegrino, J.B. Seoane-Sep\'{u}lveda,
				\textit{Lineability: The Search for Linearity in Mathematics}. Monographs and
				Research Notes in Mathematics. CRC Press, Boca Raton $($2016$)$.}
			
			\bibitem {AGS}\href{https://doi.org/10.1090/S0002-9939-04-07533-1}{R.M. Aron,
				V.I. Gurariy, J.B. Seoane-Sep\'{u}lveda, \textit{Lineability and spaceability
					of sets of functions on }$\mathbb{R}$. Proc. Am. Math. Soc. \textbf{133}
				$($2005$)$, 795--803.}
			
			\bibitem {bernaljfa}{\href{https://doi.org/10.1016/j.jfa.2013.11.014}{{L.
						Bernal-Gonz\'{a}lez, M. Ord\'{o}\~{n}ez-Cabrera: Lineability criteria, with
						applications. J. Funct. Anal. \textbf{266} $($2014$)$, 3997-4025.}}}
			
			\bibitem {DR}\href{https://doi.org/10.1007/s00574-021-00246-9}{D. Diniz and A.
				Raposo Jr, \textit{A note on the geometry of certain classes of linear
					operators}, $($English summary$)$ Bull. Braz. Math. Soc. $($N.S.$)$
				\textbf{52} $($2021$)$, 1073--1080.}
			
			\bibitem {FPT}\href{https://doi.org/10.1007/s00574-019-00142-3}{V.V.
				F\'{a}varo, D. Pellegrino, D. Tomaz, \textit{Lineability and spaceability: a
					new approach}, Bull. Braz. Math. Soc. \textbf{51} $($2020$)$, 27--46.}
			
			\bibitem {Raposo}\href{https://doi.org/10.1090/proc/16608}{V.V. F\'{a}varo, D.
				Pellegrino, A.B. Raposo J\'{u}nior, G.S. Ribeiro, \textit{General criteria for
					a stronger notion of lineability}, Proc. Amer. Math. Soc. \textbf{152} $($3$)$
				$($2024$)$, 941--954.}
			
			
			\bibitem {Gurariy}\href{https://mathscinet.ams.org/mathscinet/relay-station?mr=1738120}{V. Fonf, V. Gurariy, M. Kadets, \textit{An infinite dimensional subspace of $C[0,1]$ consisting of nowhere differentiable functions}. C. R. Acad. Bulgare Sci. \textbf{52} (1999), no. 11-12, 13–16.}
			
			\bibitem {Piazza}\href{https://doi.org/10.1090/S0002-9939-1995-1328375-8}{L.
				Rodr\'{\i}guez-Piazza, \textit{Every Separable Banach space is isometric to a
					space of continuous nowhere differentiable functions}. Proc. Amer. Math. Soc.
				\textbf{123} $($1995$)$, no. 12, 3649--3654.}
			
			\bibitem {Leo}\href{https://doi.org/10.1112/blms.12858}{P. Leonetti, T. Russo,
				J. Somaglia, \textit{Dense lineability and spaceability in certain subsets of}
				$\ell_{\infty}$, Bull. Lond. Math. Soc. \textbf{55} $($2023$)$, 1--21.}
			
			\bibitem {PR}\href{https://doi.org/10.1016/j.indag.2020.12.006}{D. Pellegrino
				and A. Raposo Jr, \textit{Pointwise lineability in sequence spaces}. Indag.
				Math. $($N.S.$)$ \textbf{32} $($2021$)$, 536--546.}
			
			
		\end{thebibliography}
		\end{document}